\documentclass[11pt,a4paper]{article}
\pdfoutput=1 % for arxiv
\usepackage{epsf,epsfig,amsfonts,amsgen,amsmath,amstext,amsbsy,amsopn,amsthm,cases,listings,color
%,lineno
}
\usepackage{ebezier,eepic}
\usepackage{color}
\usepackage{multirow}
\usepackage{epstopdf}
\usepackage{graphicx}
\usepackage{pgf,tikz}
\usepackage{mathrsfs}
\usepackage[marginal]{footmisc}
\usepackage{enumitem}
\usepackage[titletoc]{appendix}
\usepackage{booktabs}
\usepackage{url}
\usepackage{relsize}
\usepackage{mathtools}
\usepackage{comment}
\usepackage{pgfplots}
\usepackage{array}
\usepackage{authblk}
\usepackage{amssymb}
\usepackage{float}
\usepackage{wasysym}
\usetikzlibrary{positioning}
\usepackage{empheq}
\usepackage{wasysym} 
\usepackage{dsfont}

\usepackage{tikz}
\usepackage{longtable}
\usepackage{subfigure}
\pgfplotsset{compat=1.17}
\usepackage{mathrsfs}
\usepackage{wasysym}
\usetikzlibrary{arrows}
\usepackage{aligned-overset}
\usepackage{bm}%for vector
\usepackage{bbm}%for vector
\usepackage[T1]{fontenc}%for polish hook
%%%%%%%%%%%%%%%%%%%%%%%%%%%%%%%%%%%%%%%%%%%%%%%%
%Do not use this in journal or arxiv version.
\usepackage[backref=page]{hyperref}
% \usepackage[notref,notcite,color]{showkeys}
%%%%%%%%%%%%%%%%%%%%%%%%%%%%%%%%%%%%%%%%%%%%%%%%

\allowdisplaybreaks[1]

\definecolor{uuuuuu}{rgb}{0.27,0.27,0.27}
\definecolor{sqsqsq}{rgb}{0.1255,0.1255,0.1255}

\setlength{\textwidth}{150mm} \setlength{\oddsidemargin}{7mm}
\setlength{\evensidemargin}{7mm} \setlength{\topmargin}{-5mm}
\setlength{\textheight}{245mm} \topmargin -18mm

\newtheorem{definition}{Definition} [section]
\newtheorem{theorem}[definition]{Theorem}
\newtheorem{lemma}[definition]{Lemma}

\newtheorem{claim}[definition]{Claim}

\newtheorem{fact}[definition]{Fact}

%for join of two graphs

\newenvironment{poc}{\begin{proof}[Proof of the claim]}{\end{proof}}
%for norm

\newcommand{\dist}{\operatorname{dist}}

%\def\qed{\hfill \rule{4pt}{7pt}}

% \def\mathrm{ex}{\mathrm{ex}}

%\begin{linenumbers}
\setlength{\parskip}{2mm}
\newcommand{\hide}[1]{}

\begin{document}
\title{\bf\Large On cliques in hypergraphs}
\date{\today}
\author{Jun Gao\thanks{Research supported by ERC Advanced Grant 101020255. Email: \texttt{jun.gao@warwick.ac.uk}}}
\affil{Mathematics Institute and DIMAP,
            University of Warwick,
            Coventry, UK}
\maketitle
%%%%%%%%%%%%%%%%%%%%%%%%%%%%%%%%%%%%%%%%%%%%%
\begin{abstract}
We prove that for any $k \ge 3$, every $k$-uniform hypergraph on $n$ vertices contains at most $n - \omega(1)$ different sizes of cliques (maximal complete subgraphs). In particular, the 3-uniform case answers a question of Erd\H{o}s.
\end{abstract}

\section{Introduction}
Given an integer $k\ge 2$ and a $k$-uniform hypergraph $\mathcal{H}$, we say a vertex set $S$ of $\mathcal{H}$ forms a complete graph if every $k$-vertex subset of $S$ is an edge in $\mathcal{H}$. A complete subgraph of $\mathcal{H}$ is called a clique if it is not contained in any other complete subgraph of $\mathcal{H}$.

In this paper we study how many distinct clique sizes that can occur in a $k$-uniform hypergraph on $n$ vertices. We denote by 
$g(n,k)$ the largest possible number of distinct clique sizes in a $k$-uniform hypergraph of $n$ vertices.
For graphs (i.e., when $k=2$), such problems were first studied by Moon and Moser \cite{MoonMoser65} in 1965. They proved (throughout this paper $\log$ will denote logarithm to the base 2) that
\[
 n-\log n-2\log\log n < g(n,2) \le n-\lfloor\log n \rfloor.
\]
Subsequently, Erd\H{o}s \cite{Erdos66} improved the lower bound to 
\[
  g(n,2)> n-\log n-\log_* n -O(1),
\]
Where $\log_* n$ is the number of iterated logarithms such that $\log \log \dots \log n <1$.
Erd\H{o}s conjectured that this represented the correct order of magnitude.  Spencer \cite{Spencer71} later disproved this by proving that
\[
g(n,2)>n-\log n -O(1) .
\]

For hypergraphs, Erd\H{o}s (see \cite{Problem83}, or Erd\H{o}s Problem $\#$775 \cite{BloomErdos775}) showed that there is a 3-uniform hypergraph on $n$ vertices which contains $n- \log_* n$ cliques of distinct size and asked whether it can be shown that there is no 3-uniform hypergraph with $n-C$ cliques of distinct sizes, for some constant $C$ and large $n$. In this paper, we prove that such a hypergraph does not exist for any 
uniformity $k$, thereby answering this question.
\begin{theorem}\label{thm: main}
    For any integers $k \ge 3$ and $C\ge 0$, there exists a constant $N=N(k,C)$ such that for all $n\ge N$, every $n$-vertex $k$-uniform hypergraph contains no more than $n-C$ different sizes of cliques.
\end{theorem}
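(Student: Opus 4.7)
My plan is to argue by contradiction, first reducing to a highly structured regime where the maximum clique $T$ occupies almost all of $V(\mathcal{H})$, and then to analyze the residual vertex set $R := V(\mathcal{H}) \setminus T$ by induction on the uniformity $k$. Suppose $\mathcal{H}$ is a $k$-uniform hypergraph on $n$ vertices with $m > n - C$ distinct clique sizes. Since every clique has at least $k-1$ vertices and the $m$ sizes are distinct integers in $[k-1, n]$, the maximum clique satisfies $|T| \ge m + k - 2 > n - C + k - 2$, so $r := |R| < C - k + 2$ is bounded in terms of $k$ and $C$ alone. Moreover, $T$ being a maximal complete subgraph forces every clique $K \neq T$ to meet $R$ nontrivially, since any subset of $T$ extends inside $T$.

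I would then classify cliques $K \neq T$ by their intersection $W := K \cap R \neq \emptyset$, writing $K = U \cup W$ with $U \subseteq T$. Using that $T$ is complete, the condition ``$K$ is complete in $\mathcal{H}$'' reduces to $U$ being complete in an auxiliary structure $\mathcal{G}_W$ on a suitable subset $T_W \subseteq T$: for $|W| = 1$, $\mathcal{G}_W$ is the restricted $(k-1)$-uniform link $L_W^{(k-1)}|_T$, while for larger $|W|$ it is a simplicial complex cut out by additional compatibility constraints coming from each nonempty $B \subseteq W$. In particular the set of sizes of cliques of $\mathcal{H}$ with $K \cap R = W$ is contained in the clique-size spectrum of $\mathcal{G}_W$ shifted by $|W|$. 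The plan is to induct on $k$: the base case $k = 2$ is Moon--Moser, which gives a $\log n$-sized gap. For the inductive step with $k \geq 3$, the induction hypothesis bounds the number of distinct clique sizes of each $\mathcal{G}_W$ (at its core a $(k-1)$-uniform object) by $|T_W| - f_{k-1}(|T_W|)$ for some $f_{k-1} \to \infty$, and aggregating across the at most $2^r = O_C(1)$ choices of $W$ should yield $m \le n - f(n)$ with $f \to \infty$, contradicting $m > n - C$ for $n$ large.

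The principal obstacle is the aggregation in the final step: a straight union over the $O_C(1)$ nonempty $W \subseteq R$ only multiplies the per-$W$ count by $2^r$ and does not, by itself, preserve the $\omega(1)$ gap, because a size missing from one $\mathcal{G}_W$ might be realized as a clique size from another. The essential technical content will be to exhibit a ``universally missing'' clique size---one absent from every $\mathcal{G}_W$'s spectrum simultaneously. A natural route is to single out a dominating vertex $w^* \in R$, say one maximizing the clique number of $L_{w^*}^{(k-1)}|_T$, and argue that the Moon--Moser-type gap of $L_{w^*}^{(k-1)}|_T$ survives being unioned with the (generally smaller) spectra arising from the other $W$'s. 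This is where the inductive control on $f_{k-1}$ must be tracked carefully in order to deliver the final $\omega(1)$ gap, and it is the place where the proof is most likely to require additional combinatorial ideas beyond the naive reduction sketched above.
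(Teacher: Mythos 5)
Your opening reduction is sound: with more than $n-C$ distinct sizes the largest clique $T$ misses only a bounded set $R$ of vertices, every other clique meets $R$, and cliques can be classified by $W=K\cap R$; for $|W|=1$ the transfer to the restricted link (with maximality preserved) is also correct. But the proposal has a genuine gap, and it is exactly the one you name yourself: the aggregation step. Knowing that each of the $O_{k,C}(1)$ classes realizes at most $|T|-\omega(1)$ distinct sizes says nothing about their \emph{union}, since different classes may have their gaps in different places and together can hit every integer in an interval of length $n-O(1)$. This is not a patchable technicality but the entire difficulty of the problem: Erd\H{o}s's construction of a $3$-uniform hypergraph with $n-\log_* n$ distinct clique sizes shows that the $\log n$-type gap of a single ``dominating'' link (Moon--Moser) cannot in general survive aggregation, so the route of protecting one link's gap cannot work as sketched, and no argument producing a ``universally missing size'' is given. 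A secondary problem: for $|W|\ge 2$ the object $\mathcal{G}_W$ is not a $(k-1)$-uniform hypergraph but an intersection of clique complexes of several uniformities, so the inductive hypothesis as you state it does not apply to it, and maximality of $K$ in $\mathcal{H}$ does not transfer to maximality in any single link.

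The paper avoids spectrum analysis of links entirely. It fixes one clique $X_i$ of each size, so $|V(X_i)|\ge n-C-i$, and organizes these cliques into an auxiliary tree: a new clique is attached according to its trace on the small sets $B_u$ (the part of the current clique's vertex set that a descendant misses), descending whenever that trace coincides with an existing child's. Two mechanisms make this succeed: a path of length $k$ in the tree would, via the simplex-type Fact~\ref{fact: clique}, force the union of two maximal cliques to be complete, contradicting maximality, so the depth is at most $k-1$; and the $i$-th vertex has degree at most $2^{C+i}$ because $|B_i|\le C+i$. Lemma~\ref{lem: key} then shows such a $(k-1,C)$-layered tree has boundedly many vertices, contradicting the fact that it must contain one vertex per size, i.e.\ at least $n-C$ vertices. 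The contradiction thus comes from counting the cliques themselves, not from exhibiting a missing size; a device playing the role of Fact~\ref{fact: clique}, controlling how your classes interact, is precisely what your sketch still lacks.
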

\section{Proof of Theorem~\ref{thm: main}}
For any hypergraph $\mathcal{H}$, we denote by $V(\mathcal{H})$ and $E(\mathcal{H})$ the vertex set and edge set, respectively. For a vertex $v$ in $\mathcal{H}$, denote by $N_\mathcal{H}(v)$ and $\deg_\mathcal{H}(v)$ the neighborhood and the degree of $v$ in $\mathcal{H}$, respectively.
Given a tree $T$ rooted at vertex $u$, let $x,y$ be two vertices in $T$. We use $\dist(x,y)$ to denote the length of the path between $x$ and $y$ in $T$, and define $\dist(x,x)=0$. 
Let $T(x)$ denote the subtree of $T$ rooted at $x$, i.e., $T(x)$ is the subtree of $T$ induced by $x$ and all vertices $v$ such that the unique path from $v$ to $u$ passes through $x$.
For any positive integer $n$, let $[n]$ denote the set $\{1,2,\cdots,n\}$.

Before proving the theorem, we introduce a definition that will be used in the argument.
\begin{definition}[$(k,C)$-layered tree]\label{def: tree}
    We say a tree $T$ is a $(k,C)$-layered tree if we can order vertices of the tree with $v_0,v_1,v_2,\cdots,v_t$ such that the following holds:
    \begin{enumerate}[label=(\arabic*)]
    \item Each vertex $v_i$ is a neighbor of some $v_j$ with $j<i$.
    \item For any $i\in [t]$, $\dist(v_0,v_i) \le k$.
    \item For each vertex $v_i\in  V(T)$, the degree of $v_i$ in $T$ is at most $2^{C+i}$.
    \end{enumerate}
\end{definition}

The next lemma shows that any $(k,C)$-layered tree has only a bounded number of vertices in terms of $k$ and $C$.
\begin{lemma}\label{lem: key}
    There exists a constant $N_0=N_0(C,k)$ such that any $(k,C)$-layered tree has at most $N_0$ vertices.
\end{lemma}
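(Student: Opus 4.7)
The plan is to proceed by induction on $k$, bounding $|V(T)|$ by analysing the subtrees rooted at $v_0$'s children and iteratively applying the inductive hypothesis.

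The base case $k = 1$ is immediate: every vertex other than $v_0$ is a neighbour of $v_0$, so $|V(T)| \le \deg_T(v_0) + 1 \le 2^C + 1$, and we may take $N_0(C,1) = 2^C + 1$.

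For the inductive step, assume $N_0(C', k-1)$ exists for every $C' \ge 0$. Let $T$ be a $(k, C)$-layered tree with ordering $v_0, v_1, \ldots, v_t$, and let $w_1, \ldots, w_{c_0}$ (with $c_0 \le 2^C$) be the children of $v_0$ listed in increasing order of their positions $i_1 < i_2 < \cdots < i_{c_0}$ in $T$. Write $m_j = |V(T(w_j))|$. First I would establish the positional bound
\[
 i_j \;\le\; 1 + m_1 + m_2 + \cdots + m_{j-1},
\]
which holds because each position in $\{1, 2, \ldots, i_j - 1\}$ is occupied by a vertex whose parent has strictly smaller position; such a vertex must be one of $w_1, \ldots, w_{j-1}$ or one of their descendants, since no later child of $v_0$ occupies a position below $i_j$.

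Next I would argue that, under the ordering inherited from $T$ (reindexed so that $w_j$ receives position $0$), each subtree $T(w_j)$ is a $(k-1, C + i_j)$-layered tree. In the packed configuration—when the vertices of $T(w_j)$ occupy consecutive $T$-positions $\{i_j, i_j+1, \ldots, i_j + m_j - 1\}$—the $\ell$-th vertex of $T(w_j)$ lies at $T$-position exactly $i_j + \ell$, and its degree bound $2^{C + i_j + \ell}$ fits the $(k-1, C + i_j)$-layered template precisely. A rearrangement or swap argument reduces the general case to the packed one by showing that the size-maximising configuration already has each $T(w_j)$ packed; alternatively, one can carry through a direct accounting of the interleaved positions using the positional bound above. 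Applying the inductive hypothesis then yields
\[
 m_j \;\le\; N_0(C + i_j,\, k-1) \;\le\; N_0\bigl(C + 1 + m_1 + \cdots + m_{j-1},\, k-1\bigr).
\]

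Finally, I would iterate this inequality across $j = 1, 2, \ldots, c_0 \le 2^C$. The first bound $m_1 \le N_0(C+1, k-1)$ is finite; substituting into the bound for $m_2$ makes $m_2$ finite in terms of $k$ and $C$; and so on. Summing produces an explicit (tower-like) bound
\[
 |V(T)| \;=\; 1 + \sum_{j=1}^{c_0} m_j \;\le\; N_0(C, k).
\]

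The main obstacle will be Step 2: rigorously verifying that $T(w_j)$ behaves as a $(k-1, C + i_j)$-layered tree when the inherited ordering on $T(w_j)$ is not packed—i.e., when other subtrees $T(w_{j'})$ have vertices interleaved among those of $T(w_j)$. Handling this carefully (either via a WLOG reduction to the packed case or via a direct interleaving estimate) is where the proof demands the most bookkeeping, but the positional identity $i_j \le 1 + m_1 + \cdots + m_{j-1}$ together with the inherited ordering should suffice to close it.
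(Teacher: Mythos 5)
Your base case, the positional bound $i_j \le 1 + m_1 + \cdots + m_{j-1}$, and the iteration in the packed case are all fine, but the step you yourself flag as the ``main obstacle'' is a genuine gap, not bookkeeping, and neither of your proposed fixes closes it. In the interleaved case the claim that $T(w_j)$ with the inherited ordering is a $(k-1, C+i_j)$-layered tree is simply not implied by the definition: the $\ell$-th vertex of $T(w_j)$ may sit at $T$-position $p$ with $p$ as large as $\ell + \sum_{j' \ne j} m_{j'}$, since vertices of \emph{every} other subtree (including those rooted at later children of $v_0$) can be interleaved before it; so the degree guarantee you inherit is only $2^{C+p}$, and the best constant you can extract this way is $C + \sum_{j'\ne j} m_{j'}$, which involves the sizes of the later subtrees you have not yet bounded --- the ``direct accounting'' route is therefore circular. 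The ``WLOG the size-maximising configuration is packed'' route is circular in a different way: to speak of a size-maximising $(k,C)$-layered tree you must already know that the sizes are bounded, which is exactly the statement of the lemma (the paper does prove a DFS/packedness statement for the maximum tree, but only in the concluding remarks, \emph{after} Lemma~\ref{lem: key} guarantees a maximum exists, and a swap argument applied to a non-extremal tree only produces a larger tree, not a packed one of the same size).

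The paper's proof inducts on $k$ and looks at the children of $v_0$ as you do, but it avoids separating the child subtrees altogether, precisely to sidestep the interleaving problem. It takes the \emph{prefix} of the ordering up to just before the $(i+1)$-st child of $v_0$, merges the first $i$ children into a single new root and deletes $v_0$; under the induced ordering every position can only decrease, and by at most $2^C$ (the number of merged children), so condition (3) transfers at the cost of an additive constant, while the merged root's degree is controlled by separately bounding $\deg_T(v_{x_j})$ for each child. Those degree bounds are themselves obtained by a bootstrap: the inductive hypothesis applied to the merged prefix tree bounds the position $x_{m'+1}$ of the next child, hence its degree $\le 2^{x_{m'+1}+C}$, one child at a time. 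If you want to salvage your plan you need an analogous device --- either keep the interleaved vertices together (prefix plus merge, as in the paper) or prove, without assuming an extremal tree exists, that any $(k,C)$-layered tree of a given size yields a packed one of at least that size; as written, the proposal does not contain such an argument.
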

\begin{proof}
    Let $T$ be a $(k,C)$-layered tree with the vertices $v_0,v_1,v_2,\cdots,v_t$ satisfying the condition in Definition \ref{def: tree}.
    The proof proceeds by induction on $k$.
    
    When $k=1$, for any $i\in[t]$, $v_i$ is the neighbor of $v_0$ as $\dist(v_0,v_i) \le 1$. 
    It follows from $\deg_T(v_0) \le 2^C$ that
    $|V(T)| = t+1 \le 2^C+1$, so Lemma~\ref{lem: key} holds for $k=1$.

    Assume Lemma~\ref{lem: key} holds for $k-1$. Let $v_{x_1},v_{x_2}, \dots, v_{x_m}$ be the neighbors of $v_0$ in $T$ such that $x_1 < x_2<\dots < x_m$.
    Set $x_{m+1} = t+1$.
    For each $i\in[m]$.
    Let $T_i$ be an subtree of  $T$ with vertex set $\{v_0,v_1,v_2,\dots, v_{x_{i+1}-2}, v_{x_{i+1}-1}\}$. Let $T'_i$ be the tree obtained from $T_i$ by merging vertices $v_{x_1},v_{x_2}, \dots, v_{x_i}$ into a single new vertex $w_{i,0}$, replacing the edge with one endpoint in $\{v_{x_1},v_{x_2}, \dots, v_{x_i}\}$ by an edge incident to $w_{i,0}$ and then deleting the vertex $v_0$. Then we have $|V(T'_i)| = x_{i+1}-i$.
    \begin{claim}\label{claim: 2.3}
        If $\deg_T(v_{x_j}) \le C_j$ for any $j\in [i]$, then $T'_i$ is a $(k-1,2^C + C + \sum^{i}_{j=1}C_j)$-layered tree.
    \end{claim}
    \begin{poc}
    Given an integer $i$, we set $w_{i,0} = w_0$.
    The remaining vertices in the tree $T'_i$ are ordered in such a way $w_1,w_2,\cdots,w_{x_{i+1}-i-1}$ that their relative order is the same as in the original sequence, i.e., if $w_{i_1} =v_{j_1}$ and $w_{i_2} = v_{j_{2}}$, then $i_1<i_2$ if and only if $j_1<j_2$.
    Next, we prove that $T'_i$ is a $(k-1,2^C + C + \sum^{i}_{j=1}C_j)$-layered tree by verifying the three conditions in the definition with ordering $w_0,w_1,w_2\cdots,w_{x_{i+1}-i-1}$ .
    The first two conditions are automatically satisfied, as we do not change the order of the vertices except for $w_0$ and the vertex $v_0$ is removed. 
    
    Note that
        \[
        \deg_{T'_i}(w_0) = \sum_{j=1}^{i} \left(\deg_{T_i}(v_{x_j}) -1 \right) \le \sum_{j=1}^{i} \left(\deg_{T}(v_{x_j}) -1 \right) < \sum^{i}_{j=1}C_j\le 2^{\sum^{i}_{j=1}C_j}, 
        \]
        thus the third condition holds for $w_0$.
        For each $w_j \ne w_0$, there exists $j' \le j+2^C$ such that $w_j =v_{j'}$, as the size of the vertex set we are contracting is $i$, which satisfies $i\le m \le 2^C$.
        Note that for each $w_j \in T'_i$ with $w_j\ne w_0$, the degree of $w_j$ in $T'_i$, is at most the degree of $w_j=v_{j'}$ in $T$.
        We derive that
        \[
        \deg_{T'_i}(w_j)\le \deg_{T}(v_{j'})\le 2^{j'+C} \le 2^{j+2^C+C} \le  2^{j+2^C + C + \sum^{i}_{j=1}C_j}.
        \]
        So $T'_i$ is a $(k-1,2^C + C + \sum^{i}_{j=1}C_j)$-layered tree.
    \end{poc}
    \begin{claim}\label{claim: 2.4}
        For each $j\in [m]$, there exists a constant $C_j=C_j(C,k)$ such that $\deg_T(v_{x_j}) \le C_j$.
    \end{claim}
    \begin{poc}
        Let $m'$ be the maximum integer such that there exists a constant $C_{m'}=C_{m'}(C,k)$ such that $\deg_T(v_{x_{m'}}) \le C_{m'}$. It follows from $v_1=v_{x_1}$ is a neighbor of $v_0$ that we can take $C_1 = 2^{C+1}$, which implies that such a $m'$ exists.

        Suppose to the contrary that $m' <m$. By Claim~\ref{claim: 2.3}, $T'_{m'}$ is a $(k-1,2^C + C + \sum^{m'}_{j=1}C_j)$-layered tree.
        By the inductive hypothesis, we know that there exists a constant \[N_{m'} = N_0\left(k-1,2^{2^C + C + \sum^{m'}_{i=1}C_i}\right) \text{ such that }
        |V(T'_{m'})| = x_{m'+1}-m' \le N_{m'}.\] 
        Take $C_{m'+1} = 2^{N_{m'} +2^C+C} $, then $C_{m'+1}$ is a constant in terms of $k$ and $C$ with
        \[\deg_{T}(v_{x_{m'+1}}) \le 2^{x_{m'+1}+C} \le 2^{N_{m'}+m'+C} \le C_{m'+1}, \]
        a contradiction.
    \end{poc}
    By Claim~\ref{claim: 2.3}, $T'_m$ is a $(k-1,2^C + C + \sum^{m}_{i=1}C_i)$ layered tree. 
    By the inductive hypothesis and Claim~\ref{claim: 2.4}, there exists a constant $N_m$ in terms of $k$ and $C$ such that $|V(T'_m)| = t+1-m \le N_m$, which implies that
    \[
    |V(T)| = t+1 \le N_m+m \le N_m+2^C.
    \]
    So Lemma~\ref{lem: key} holds for $k$.
\end{proof}

The following fact will be useful in our proof.
\begin{fact}\label{fact: clique}
    Let $\mathcal{H}$ be a $k$-uniform hypergraph and $S_1,S_2,\dots, S_{k+1}$ be $k+1$ vertex sets of $\mathcal{H}$. If for each $i\in [k+1]$, $(\bigcup_{j=1,j\ne i}^{k+1}S_j) $ forms a complete graph in $\mathcal{H}$, then $(\bigcup_{j=1}^{k+1}S_j)$ forms  a complete graph in $\mathcal{H}$.
\end{fact}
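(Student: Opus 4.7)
The plan is to verify the definition of ``forms a complete graph'' directly: it suffices to show that every $k$-element subset $T \subseteq \bigcup_{j=1}^{k+1} S_j$ is an edge of $\mathcal{H}$. Since each of the $k+1$ unions $\bigcup_{j \ne i} S_j$ already forms a complete subgraph by hypothesis, the whole problem reduces to finding, for each such $T$, an index $i \in [k+1]$ with $T \subseteq \bigcup_{j\ne i} S_j$; the subset $T$ is then automatically an edge of $\mathcal{H}$ by assumption.

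To produce such an index I would use a short pigeonhole argument. Suppose, for contradiction, that for every $i \in [k+1]$ one has $T \not\subseteq \bigcup_{j\ne i} S_j$; then for each $i$ there exists a vertex $v_i \in T$ belonging to $S_i$ but to no other $S_j$. The vertices $v_1, v_2, \dots, v_{k+1}$ must be pairwise distinct, since $v_i \in S_i$ while $v_{i'} \notin S_i$ whenever $i' \ne i$. This forces $|T| \ge k+1$, contradicting $|T| = k$. Hence some index $i$ works, and for that $i$ the subset $T$ is a $k$-element subset of the complete subgraph $\bigcup_{j\ne i} S_j$, so it is an edge of $\mathcal{H}$, finishing the argument.

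I do not foresee any genuine obstacle: the content of the fact is essentially the observation that $k+1$ sets cannot simultaneously each contribute a vertex exclusive to themselves to a common $k$-element subset. The only point to be careful about is that the sets $S_1,\dots,S_{k+1}$ are not assumed to be pairwise disjoint, so the pigeonhole has to be phrased in terms of ``vertices of $T$ exclusive to $S_i$'' rather than in terms of a partition of $T$; once that is done the verification is immediate.
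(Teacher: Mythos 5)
Your proof is correct: the key point that any $k$-element subset $T$ of $\bigcup_{j=1}^{k+1}S_j$ must avoid an ``exclusive'' vertex of at least one $S_i$, and hence lies inside $\bigcup_{j\ne i}S_j$, is exactly the right pigeonhole argument, and you correctly handle the fact that the $S_j$ need not be disjoint. The paper itself states this fact without proof, so your write-up simply supplies the standard justification the author leaves implicit; there is nothing to add or correct.
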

Now we are ready to prove Theorem~\ref{thm: main}.
\begin{proof}[Proof of Theorem~\ref{thm: main}]
Let $\mathcal{H}$ be a $k$-uniform hypergraph on $n$ vertices. Suppose that $\mathcal{H}$ contains at least $n-C$ different sizes of cliques. 
Let $X_0,X_1,X_2,\dots, X_t$ be the collection of cliques in $\mathcal{H}$ such that $|V(X_0)| \ge |V(X_1)| \ge \dots \ge |V(X_t)|$.
Since $\mathcal{H}$ contains at least $n-C$ different sizes of cliques, 
we derive that 
\[
t+1\ge n-C \quad \text{and} \quad|V(X_i)| \ge n-C-i \quad \text{for}\quad i =0,1,2,\dots ,t.
\]We will show that there exists a $(k-1, C)$-layered tree $T$ with $V(T)$=$\{x_0,x_1,\dots, x_t\}$. If such a tree exists, then by Lemma~\ref{lem: key}, there exists a constant $N_0 = N_0(k-1,C)$ such that 
\[n-C \le t+1=|V(T)|\le N_0,
\] 
which implies that $n \le N_0+C$. So if we take $N =N_0 +C+1$, then every $k$-uniform hypergraph on $n$ vertices with $n\ge N$ contains no more than $n-C$ different
sizes of cliques.
So we only need to show such a tree exists.

Let $A_0 = V(X_0), B_0 = V(\mathcal{H})\setminus A_0$ and $T_0$ be the tree with single vertex $x_0$.
We now construct a sequence trees $T_0 \subseteq T_1 \subseteq T_2 \subseteq \dots \subseteq T_t = T$ rooted at $x_0$ for each $i\in \{0,1,2,\dots,t\}$ as follows:
\begin{itemize}
    \item Assume that for some integer $i$ with $0\le i<t$, we have constructed a tree $T_i$ on the vertex set $\{x_0,x_1,\dots x_i\}$, together with two sequences of sets $A_0,A_1,\dots,A_i$ and  $B_0,B_1,\dots,B_i$. Set $T^* = T_i$, $u=x_0$. For ease of notation, for each $0\le j\le t$, we define $X_{x_j} =X_j$, $A_{x_j} =A_j$ and $B_{x_j} = B_j$, respectively. We construct the new tree $T_{i+1}$ by adding the vertex $x_{i+1}$ following the rule described below:
    \begin{enumerate}  
    \item  If for every vertex $v$ adjacent to $u$ in $T^*$, $V(X_v)\cap B_u \ne V(X_{i+1}) \cap B_u $ or if $u$ has no neighbor in $T^*$, then let $T_{i+1}$ be the tree obtained from $T_{i}$ by adding the edge $u x_{i+1}$, and set $A_{i+1} = A_u\cap V(X_{i+1})$ and $B_{i+1} = A_{u} \setminus A_{i+1}$.
    \item Otherwise there exists vertex $w$ adjacent to $u$ in $T^*$ such that $V(X_w)\cap B_u = V(X_{i+1}) \cap B_u $. Then we reset $T^* = T_i(w)$ be the subtree of $T_i$ rooted at $w$, and reset $u=w$. Then return to Step 1.
    \end{enumerate}
    \item Since after each iteration, the distance between $x_0$
 and $u$ increases by one, the process must terminate after finitely many steps. Consequently, we obtain tree $T_{i+1}$ and two sets $A_{i+1},B_{i+1}$.
\end{itemize}
Now we claim that $T$ is a $(k-1,C)$-layered tree with the vertex ordering $x_0,x_1,\dots,x_t$ satisfying the
condition in Definition~\ref{def: tree}.
By the construction of 
$T$, we know that Condition (1) holds automatically. 

Before proving Conditions (2) and (3), we first establish some properties of the paths in $T$.
Let $P= y_0y_1y_2\dots y_r$ be a path of length $r$ with $x_0 =y_0$ as an endpoint.
Let $S_i = V(X_{y_i})\cap B_{y_{i-1}} $ for $i\in [r]$ and $S_0 = \emptyset$.
By the construction of $T$, the following holds:
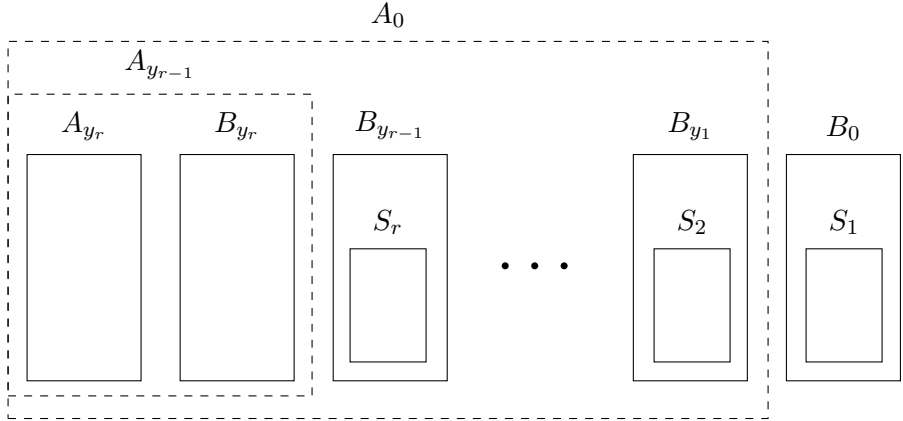
\begin{figure}[ht]
    \centering
\begin{tikzpicture}
    \node[draw, minimum width=1.5cm, minimum height=3cm] at (0,0) (r1) {};
    \node[above=0.5mm of r1] {$A_{y_r}$};
    \node[draw, minimum width=1.5cm, minimum height=3cm, right=0.5cm of r1] (r2) {};
    \node[above=0.5mm of r2] {$B_{y_r}$};
    \node[draw, minimum width=1.5cm, minimum height=3cm, right=0.5cm of r2] (r3) {};
    \node[above=0.5mm of r3] {$B_{y_{r-1}}$};

    \node[right=0.5cm of r3](doc) {\Huge $\cdots$};
    
    \node[draw, minimum width=1.5cm, minimum height=3cm, right=0.5cm of doc] (r4) {};
    \node[above=0.5mm of r4] {$B_{y_1}$};
    \node[draw, minimum width=1.5cm, minimum height=3cm, right=0.5cm of r4] (r5) {};
    \node[above=0.5mm of r5] {$B_0$};

    \node[draw, minimum width=1cm, minimum height=1.5cm] (r6) at (4,-0.5)  {};
    \node[draw, minimum width=1cm, minimum height=1.5cm] (r7) at (8,-0.5)  {};
    \node[draw, minimum width=1cm, minimum height=1.5cm] (r8) at (10,-0.5)  {};

    \node[draw,dashed,minimum width=4cm, minimum height=4cm] (r9) at (1,0.3)  {};
    \node[draw,dashed,minimum width=10cm, minimum height=5cm] (r10) at (4,0.5)  {};
    \node[above=0.5mm of r6] {$S_{r}$};
    \node[above=0.5mm of r7] {$S_2$};
    \node[above=0.5mm of r8] {$S_1$};
    \node[above=0.5mm of r9] {$A_{y_{r-1}}$};
    \node[above=0.5mm of r10] {$A_{0}$};
\end{tikzpicture}
\caption{Properties of the path in $T$.}
    \label{fig}
\end{figure}
\begin{itemize}
    \item For any $i \le r$, $A_{y_{i-1}}= A_{y_i} \sqcup B_{y_i},$ which implies that $V(\mathcal{H}) =A_0 \sqcup B_0 =\dots = A_{y_r} \sqcup B_{y_r} \sqcup B_{y_{r-1}} \sqcup \dots \sqcup B_0$, where we use $\sqcup$ to denote the disjoint union.
    \item For distinct $i$ and $j$, $S_i= V(X_{y_i})\cap B_{y_{i-1}}$ and $S_j = V(X_{y_j})\cap B_{y_{j-1}}$ are disjoint as $B_{y_{i-1}}$ and $B_{y_{j-1}}$ are disjoint.
    \item For each pair $i,j$ with $1\le i<j \le r$, since $y_j$ is not a neighbor of $y_{i-1}$ and $y_j\in T(y_i)$, by the process of adding the vertex $y_{j}$ to the tree, we derive that 
    \[V(X_{y_j}) \cap B_{y_{i-1}}=V(X_{y_i}) \cap B_{y_{i-1}} =S_i.\] 
    This means that for any $v,w\in  T(u)$ with $w\in T(v)$ and $v\ne u$, $X_v \cap B_u = X_w \cap B_u$.
    
    \item For each $j \ge 1$, by the definition of $A_{y_{j}}$, we have $V(X_{y_j})\cap A_{y_{j-1}} =A_{y_j}$. Then we derive that
    \[
    V(X_{y_j})= V(X_{y_j})\cap V(\mathcal{H}) =  V(X_{y_j}) \cap \left(A_{y_{j-1}}\sqcup \bigsqcup^{j-1}_{i=0} B_{y_{i}}\right) =  A_{y_j}\sqcup\bigsqcup^{j}_{i=1}  S_i  .
    \]
    \item For any $j\in[r]$, since $V(X_{y_i}) \not\subseteq V(X_{y_{i-1}})$ and
    \[V(X_{y_{j}}) = A_{y_j}\sqcup \bigsqcup^j_{i=1}S_i \subseteq A_{y_{j-1}}\cup S_j \cup \bigcup^{j-1}_{i=1}S_i = S_j \cup V(X_{y_{j-1}}),
    \] we have $S_j \ne \emptyset$, which implies that if $v\in  T(u)$ with $v\ne u$, then $X_v \cap B_u \ne \emptyset$.
\end{itemize}

To prove (2), we have the following claim.
\begin{claim}
    For any $i\in [t]$, $\dist(x_0,x_i) \le k-1$.
\end{claim}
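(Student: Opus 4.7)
My plan is to argue by contradiction: assume $\dist(x_0,x_i) \ge k$, and consider the initial segment $y_0 = x_0, y_1, \ldots, y_k$ of the path to $x_i$, so each $S_l = V(X_{y_l}) \cap B_{y_{l-1}}$ is nonempty for $l = 1, \ldots, k$ and $V(X_{y_l}) = A_{y_l} \sqcup S_1 \sqcup \cdots \sqcup S_l$ by the identities established earlier.

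The key structural observation is that $S_l \subseteq B_{y_{l-1}} \subseteq A_{y_{l-2}} \subseteq A_{y_0}$ for every $l \ge 2$, whereas $S_1 \subseteq B_{y_0}$ is disjoint from $A_{y_0} = V(X_0)$. Therefore the union of the $k+1$ cliques on the path simplifies to $\bigcup_{j=0}^{k} V(X_{y_j}) = A_{y_0} \cup S_1$, which strictly contains $V(X_0)$ because $S_1 \neq \emptyset$. If this union can be shown to be a complete graph, the maximality of $X_0$ is violated, closing the argument.

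To establish completeness I would apply Fact~\ref{fact: clique} with a carefully designed collection of $k+1$ sets whose union is $A_{y_0} \cup S_1$. A naive choice using the cliques $V(X_{y_0}), \ldots, V(X_{y_k})$ themselves is not useful, because for each $i \in \{1,\ldots,k-1\}$ the subunion $\bigcup_{j \ne i} V(X_{y_j})$ already coincides with the total (since $V(X_{y_0}) = A_{y_0}$ is always present and any remaining clique supplies $S_1$), so Fact's hypothesis becomes tautological. The plan is to use instead the finer decomposition $A_{y_0} \cup S_1 = (A_{y_k} \cup S_1) \sqcup B_{y_1} \sqcup B_{y_2} \sqcup \cdots \sqcup B_{y_k}$ into exactly $k+1$ pieces, and to show that each $k$-subunion obtained by dropping one piece is contained in a single path-clique $V(X_{y_j})$ for an appropriate $j$ depending on the omitted piece, using the identities $V(X_{y_j}) \cap A_{y_0} = A_{y_j} \cup S_2 \cup \cdots \cup S_j$ and $V(X_{y_j}) \cap B_{y_{l-1}} = S_l$ for $j \ge l$.

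The main obstacle is precisely this containment step: matching each of the $k+1$ subunions of the partition to a clique on the path that actually contains it is delicate, because the sets $B_{y_l}$ and $S_1$ live in genuinely different ``layers'' of $V(\mathcal{H})$. I expect the argument to hinge on the precise count coming from the $k$-uniform nature of Fact~\ref{fact: clique}: it demands exactly $k+1$ sets, and the dec
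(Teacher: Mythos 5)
There is a genuine gap, and it sits exactly at the step you flag as the ``main obstacle'': with your partition $A_{y_0}\cup S_1=(A_{y_k}\cup S_1)\sqcup B_{y_1}\sqcup\cdots\sqcup B_{y_k}$ the hypothesis of Fact~\ref{fact: clique} cannot be verified. Dropping $B_{y_1}$ is fine (the remaining union is exactly $V(X_{y_1})$), and dropping $A_{y_k}\cup S_1$ is fine (the rest lies in $V(X_{y_0})$), but dropping $B_{y_l}$ for any $l\ge 2$ leaves a set that contains the \emph{whole} block $B_{y_1}$ together with $S_1$ (and, for larger $l$, further whole blocks $B_{y_i}$ with $2\le i<l$). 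No clique on the path can contain this: $V(X_{y_0})=A_{y_0}$ misses $S_1$, $V(X_{y_1})$ is disjoint from $B_{y_1}$, and for $j\ge 2$ one has $V(X_{y_j})\cap B_{y_1}=S_2$, which is in general a proper slice of $B_{y_1}$. So $k-1$ of the $k+1$ required ``drop one piece, land in a complete graph'' conditions fail, and no choice of cliques $X_{y_j}$ repairs this, precisely because each path clique meets the earlier blocks only in the thin slices $S_i$, never in the full $B_{y_i}$. (Your target conclusion, completeness of $A_{y_0}\cup S_1$, is correspondingly too ``fat'' to be reached from the available cliques.)

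The paper avoids this by choosing $k+1$ sets whose union is thin rather than fat: $U=A_{y_k}\cup S_k$, $W=A_{y_{k-1}}$, and the slices $S_1,\dots,S_{k-1}$, so that the total union is $V(X_{y_{k-1}})\cup V(X_{y_k})$. Dropping $U$ or $W$ gives exactly $V(X_{y_{k-1}})$ or $V(X_{y_k})$, and dropping $S_j$ gives a set contained in $V(X_{y_{j-1}})$, because the later slices satisfy $S_{j+1}\cup\cdots\cup S_k\subseteq B_{y_j}\cup\cdots\cup B_{y_{k-1}}\subseteq A_{y_{j-1}}$ while $S_1,\dots,S_{j-1}\subseteq V(X_{y_{j-1}})$ by definition. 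Fact~\ref{fact: clique} then makes $V(X_{y_{k-1}})\cup V(X_{y_k})$ complete, and since $S_k\ne\emptyset$ this strictly contains $V(X_{y_{k-1}})$, contradicting maximality of $X_{y_{k-1}}$ (rather than of $X_0$). If you want to salvage your write-up, replace your block partition by these slice-based sets; the rest of your setup (contradiction via a length-$k$ path, the disjointness identities, nonemptiness of the $S_i$) is consistent with the paper.
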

\begin{poc}
    Suppose, for contradiction, that there exists a path of length $k$ with $x_0$ as an endpoint. 
    Without loss of generality, we assume that $y_0y_1\dots y_k$ is a path of length $k$ in $T$ with $x_0=y_0$.     
Let $U = A_{y_k} \cup S_k$, $W=A_{y_{k-1}}$, then we have
\begin{align*}
    &U\cup S_{k-1} \cup S_{k-2} \cdots \cup S_1 = V(X_{y_k}), \\
    &W \cup S_{k-1} \cup S_{k-2} \cdots \cup S_1 = V(X_{y_{k-1}}), 
\end{align*}
For each $j\in [k-1]$, it follows from $S_i \subseteq B_{y_{i-1}}$ and $A_{y_{i-1}} = A_{y_{i}} \cup B_{y_{i}}$ for any $i\in[k]$  that
\begin{align*}
    U\cup W \cup \bigcup_{i=1, i\ne j}^{k-1}S_i &= A_{y_{k-1}}\cup \bigcup^{k}_{i=j+1} S_{i}\cup \bigcup^{j-1}_{i=1} S_i \\
    &\subseteq  A_{y_{k-1}}\cup \bigcup^{k}_{i=j+1} B_{y_{i-1}}\cup \bigcup^{j-1}_{i=1} S_i\\
    &= A_{y_{j-1}}\cup \bigcup^{j-1}_{i=1} S_i=V(X_{y_{j-1}}).      
\end{align*}
By Fact~\ref{fact: clique}, $ V(X_{y_k})\cup V(X_{y_{k-1}}) = U\cup W \cup  S_{k-1} \cup S_{k-2} \cdots \cup S_1$ forms a complete graph, which contradicts the maximality of $X_{y_k}$ and $X_{y_{k-1}}$.
\end{poc}
Finally, we prove that Condition (3) holds. By the construction of  $T$, we know that for any vertex $x_i$, the new vertices added adjacent to $x_i$ correspond to distinct subsets of $B_i$. Moreover, each of these subsets is nonempty. Therefore, the degree of $x_0$ in T is at most $2^{|B_0|}-1 <  2^{|B_0|}$, and the degree of $x_i$ for $i\ne 0$ is at most $1+2^{|B_i|}-1 = 2^{|B_i|}$.
Since $B_i \cap V(X_i) = \emptyset$, we have $|B_i| \le n- |V(X_i)| \le C+i$, which implies that $\deg_T(x_i) \le 2^{C+i}$ for each $i\in \{0,1,2,\dots, t\}$.
Thus, $T$ is a $(k-1,C)$-layered tree. This completes the proof of Theorem~\ref{thm: main}.
\end{proof}
\section{Concluding remarks} \label{sec:remark}
If we let $f(n,k) = n-g(n,k)$, then in this paper, we prove that for every $k \ge 3$, $f(n,k) \to \infty$ as $n \to \infty$. A natural question arising from this result is: what growth rate can the function $f(n,k)$ achieve based on this proof? Erd\H{o}s showed that $f(n,3) \le \log_* n$. Using our approach, we can obtain
$f(n,3) \ge \log\!\bigl(\log_*(n+1)\bigr) - 1$ by applying the following lemma.
\begin{lemma}\label{lem: max tree}
    Let $T$ be a $(k,C)$-layered tree with a maximum number of vertices and vertex order $v_0, v_1, \dots, v_t$. Then $v_0, v_1, \dots, v_t$ corresponds to a depth-first search (DFS) of $T$.
\end{lemma}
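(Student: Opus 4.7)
The strategy is a contradiction-plus-extension argument. Suppose $T$ is a maximum $(k,C)$-layered tree with ordering $\sigma = (v_0, v_1, \ldots, v_t)$ that is not a DFS of $T$. The goal is to construct a valid $(k,C)$-layered tree with strictly more than $t+1$ vertices, contradicting maximality.

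First I would translate the hypothesis into a local condition: $\sigma$ is a DFS of $T$ if and only if, for every vertex $v \in V(T)$, the index set $\{i : v_i \in T(v)\}$ is a contiguous block of $\{0,1,\ldots,t\}$; equivalently, at each step $i+1$ the tree-parent of $v_{i+1}$ in $T$ lies on the path from $v_0$ to $v_i$ in $T$. Failure of this property produces some internal vertex whose subtree indices are scattered by vertices outside that subtree.

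The heart of the proof is to construct a DFS reindexing $\tau$ of the same underlying tree $T$ by carefully specifying the child order at each internal vertex --- for instance, sorting siblings in nondecreasing order of their degrees in $T$, so that high-degree subtrees are postponed and receive larger DFS indices. The key technical claim is that this $\tau$ satisfies condition (3), which reduces to showing $\tau(u) \ge \sigma(u)$ for every internal vertex $u$ of $T$. I would prove this by induction on $\dist(v_0, u)$: in the inductive step, express $\tau(u) - \tau(w)$ and $\sigma(u) - \sigma(w)$ (with $w$ the parent of $u$) in terms of sibling-subtree sizes and count how many descendants of earlier siblings interleave between $w$ and $u$ in $\sigma$; the desired inequality then follows from the inductive hypothesis together with condition (1) (descendants appear after ancestors in any valid order). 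For a leaf $u$ the inequality may go the other way, but this is harmless because $\deg_T(u) = 1$ fits within any bound.

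Finally, since $\sigma$ is not a DFS, the inductive bookkeeping must yield a strict inequality $\tau(u^\star) > \sigma(u^\star)$ for some internal vertex $u^\star$; then $\deg_T(u^\star) \le 2^{C+\sigma(u^\star)} < 2^{C+\tau(u^\star)}$, so $u^\star$ has strict slack in its degree bound under $\tau$ while still satisfying $\dist(v_0, u^\star) < k$ (being internal). Attaching a new leaf to $u^\star$ with index $t+1$ yields a valid $(k,C)$-layered tree on $t+2$ vertices (conditions (1) and (2) being immediate and condition (3) being already established for $\tau$ elsewhere), contradicting the maximality of $T$. The main obstacle is the validity claim for $\tau$: naive choices of child order (e.g.\ by original index) can produce $\tau(u) < \sigma(u)$ at internal vertices, as small examples show, so the correct child-order convention and the clean execution of the counting step are the technical crux of the proof.
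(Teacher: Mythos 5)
The strategy (contradict maximality by exhibiting a strictly larger $(k,C)$-layered tree) is the right one, but your key technical claim is false, and this is a genuine gap rather than a detail. You claim that the DFS reindexing $\tau$ obtained by visiting siblings in nondecreasing order of degree satisfies $\tau(u)\ge\sigma(u)$ for every internal vertex $u$, and you propose to prove it by induction using only condition (1). Consider the tree with root $v_0$, children $a,b$, where $a$ has ten leaf children, $b$ has one child $c$, and $c$ has one leaf child $d$, with ordering $\sigma=(v_0,a,b,\ell_1,\dots,\ell_{10},c,d)$; this is a valid $(3,3)$-layered ordering. Your rule visits $b$ (degree $2$) before $a$ (degree $11$), giving $\tau(b)=1<\sigma(b)=2$ and $\tau(c)=2<\sigma(c)=13$, with $b,c$ internal. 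Since your proposed induction never invokes maximality of $T$, it would have to apply to this tree as well, so it cannot go through; the ``correct child-order convention'' you defer to does not exist in the form you describe.

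The problem is deeper than the choice of sibling order: even the weaker statement you actually need --- that \emph{some} DFS reindexing of the same tree $T$ still satisfies condition (3) --- fails for general layered trees. Take $C=1$, $k=3$, root $v_0$ with children $a,b$; let $a$ have a single child $a'$ with $15$ leaf children ($\deg(a')=16=2^{C+3}$) and $b$ a single child $b'$ with $31$ leaf children ($\deg(b')=32=2^{C+4}$), ordered $\sigma=(v_0,a,b,a',b',\text{leaves})$. This satisfies (1)--(3), but any DFS from $v_0$ places either $a'$ or $b'$ at index $2$, where its degree exceeds $2^{C+2}=8$; so no valid DFS reordering of $T$ exists at all. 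Hence any repair of your approach must use maximality of $T$ \emph{while constructing} the new ordering, which your outline does not do. The paper's proof avoids global reordering entirely: at the first violation of the DFS property it inserts a brand-new vertex $w$ into the order just before $v_j$, attached to a deeper vertex $v_i$, deletes one edge $v_iv_{i'}$ and re-hangs $v_{i'}$ onto $v_j$. All original indices weakly increase, only $\deg(v_j)$ grows by one (absorbed by its index shift), and distances do not increase, so one obtains a larger $(k,C)$-layered tree by purely local checks --- exactly the robustness that the global re-indexing strategy lacks.
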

\begin{proof}
Suppose, to the contrary, that $v_0, v_1, \dots, v_t$ is not a DFS of $T$. 
Let $j$ be the smallest integer that does not satisfy the DFS process. 
Since $T$ is $(k, C)$-layered, there exists $j' < j$ such that $v_{j'}v_j \in T$. 
Then there exists an index $i<j$ with $\dist(v_0, v_i) > \dist(v_0, v_{j'})$ 
and some $i' > j$ such that $v_i v_{i'} \in T$. 
Let $T'$ be the tree obtained from $T$ by adding a new vertex $w$ 
and a new edge $w v_i$, deleting the edge $v_i v_{i'}$, 
and adding the edge $v_{i'} v_j$. 
It is straightforward to verify that $T'$ is also a $(k, C)$-layered tree 
with vertex order 
$v_0, v_1, \dots, v_{j-1}, w, v_j, \dots, v_t,$
which leads to a contradiction.
\end{proof}
We define a sequence $a_0, a_1, \dots, a_{2^C}$ by setting $a_0 = 1$ and, for $i \ge 1$, 
\[
a_i = 2^{a_{i-1} + C} + a_{i-1,}\text{ which implies that $a_i+C+1 \le 2^{a_{i-1}+C+1}.$}
\]
By Lemma~\ref{lem: max tree} and defination of layerd tree, we know that the maximum number of vertices in a $(2,C)$-layered tree is $a_{2^C}$.
Let $\mathcal{H}$ be a $3$-uniform hypergraph on $n$ vertices that contains $n - C$ distinct clique sizes. Then $C \ge 2$.
Define $\log^{(1)} n = \log n$ and $\log^{(i+1)} n = \log\bigl(\log^{(i)} n\bigr)$ for $i \ge 1$.
It follows from the proof of Theorem~\ref{thm: main} that $n - C \le a_{2^C}$, which implies that
$
\log^{(2^C)} (n+1) \le C + 2.
$
If $C < \log\!\bigl(\log_*(n+1)\bigr) - 1$, then
$2^{C} \cdot 2 < \log_*(n+1),$
which  yields
$\log^{(2^C)}(n+1) > 2^{C} \ge C + 2,$
a contradiction. Therefore,
\[
C \ge \log\!\bigl(\log_*(n+1)\bigr) - 1.
\]

\section*{Acknowledgements}
The author thanks Shumin Sun for reading a draft of this paper and providing helpful comments, Stijn Cambie for a correction in the abstract, and Jie Ma and Haotian Yang for their comments.
\bibliographystyle{abbrv}
\bibliography{clique}
\end{document}